\def \build#1#2#3{\mathrel{\mathop{#1}\limits^{#2}_{#3}}}
\def\tr{\mathop{\rm tr}\nolimits}
\def\E{\mathop{\rm E}\nolimits}
\def\diag{\mathop{\rm diag}\nolimits}
\def\cov{\mathop{\rm Cov}\nolimits}
\def\vec{\mathop{\rm vec}\nolimits}
\newcommand {\bgtext}[1] {\boldmath
             \(#1\)\unboldmath}
\newcommand {\bg}[1]
             {\mbox{\bgtext{#1}}
            }
\renewenvironment{abstract}
                 {\vspace{6pt}
                  \begin{center}
                  \begin{minipage}{5in}
                  \centerline{\textbf{Abstract}}
                  \noindent\ignorespaces
                 }
                 {\end{minipage}\end{center}}
\newtheorem{theorem}{\textbf{Theorem}}[section]
\theoremstyle{definition}
\newtheorem{example}{\textbf{Example}}[section]
\title{\Large \textbf{Tests about $R$ multivariate simple linear models}}
\author{
  \textbf{Jos\'e A. D\'{\i}az-Garc\'{\i}a} \thanks{Corresponding author\newline
   {\bf Key words.}  Matrix multivariate elliptical distributions, multivariate linear model,
   likelihood ratio test, union-intersection criterion.\newline
    2000 Mathematical Subject Classification. 62J05; 62H15; 62H10}\\
  {\normalsize Universidad Aut\'onoma de Chihuahua} \\
  {\normalsize Facultad de Zootecnia y Ecolog\'{\i}a} \\
  {\normalsize Perif\'erico Francisco R. Almada Km 1, Zootecnia} \\
  {\normalsize 33820 Chihuahua, Chihuahua, M\'exico}\\
  {\normalsize E-mail: jadiaz@uach.mx}\\
  \textbf{Oscar Alejandro Mart\'{\i}nez-Jaime}\\
  {\normalsize Universidad de Guanajuato} \\
  {\normalsize Departamento de Agronom\'{\i}a} \\
  {\normalsize Divisi\'on Ciencias de la Vida, Campus Irapuato-Salamanca} \\
  {\normalsize Ex-Hacienda "El Copal", Km. 9; carretera Irapuato-Silao} \\
  {\normalsize 36500 Irapuato, Guanajuato, M\'exico} \\
  {\normalsize E-mail: oscarja@ugto.mx} \\[2ex]
}
\date{}
\begin{document}
\maketitle

\begin{abstract}
Hypothesis about the parallelism of the regression lines in $R$ multivariate simple linear models are
studied in this paper. Tests on common intercept and sets of lines intersected at a fixed value, are also
developed. An application in an agricultural entomology context is provided.
\end{abstract}

\section{Introduction}\label{sec1}

There a number of research studies involving the behavior of a dependent variable $Y$ as a  function of
one independent variable $X$. Sometimes the experiment accepts a simple linear model and usually this
model is proposed for different experimental or observational situations. Then the following situations
can emerge: the simple linear models have the same intercept; or these lines are parallel; or given a
particular value of the independent variable $X$,  say $x_{0}$, the lines are intersected in such value.

We illustrate these situations through the following examples:

\begin{example}
Several diets are used to feed goats in order to determine the effect for losing or gaining weight. Three
goat breeds are used, and for each breed the relationship between the gain or loss of weight in pounds
per goat $Y$ and the amount of diet in pounds ingested for each goat $X$ is given by
$$
  y_{1j}= \alpha_{1}+\beta_{1}x_{1j}+\epsilon_{1j} \qquad y_{2k}= \alpha_{2}+\beta_{2}x_{2k}+\epsilon_{2k}
  \quad y_{3r}= \alpha_{3}+\beta_{3}x_{3r}+\epsilon_{3r},
$$
$j = 1, 2, \dots, n_{1}$, $k = 1, 2, \dots, n_{2}$, $r = 1, 2, \dots, n_{3}$, $n_{s} \geq 2$, $s =
1,2,3$. The investigator claims for a parallelism of the lines, that is, if
$\beta_{1}=\beta_{2}=\beta_{3}$ (if the increase in the average weight of each goat per unit of diet is
the same for all breeds). Or the researcher can ask for equality in the intercepts, that is, if
$\alpha_{1} = \alpha_{2} = \alpha_{3}$ (if the average weight of each goat breed is the same when all
breeds are fed with the same diet).
\end{example}

\begin{example}
An essay is carried out to study the relationship of the age $X$ and the cholesterol content in blood $Y$ of
individuals between 40 and 55 years of age. In this situation, a simple linear model is assumed, but in
the essay is considered female and male individuals, then, it is proposed that a model for each sex is
more appropriate. The models are
$$
  y_{1j}= \alpha_{1}+\beta_{1}x_{1j}+\epsilon_{1j}, \quad\mbox{ for famales},
$$
and
$$
  y_{2k}= \alpha_{2}+\beta_{2}x_{2k}+\epsilon_{2k}, \quad\mbox{ for males;}
$$
$j = 1, 2, \dots, n_{1}$, $k = 1, 2, \dots, n_{2}$, $n_{s} \geq 2$, $s = 1,2$. The investigator wants to know
 if $\alpha_{1}+\beta_{1}x_{0}=\alpha_{2}+\beta_{2}x_{0}$ (if at age $x_{0}$ the
cholesterol content in blood is the same for female and male individuals).
\end{example}

However, more realistic situations ask for the behavior of more than one dependent variable $\mathbf{y}'
= (y_{1},\dots,y_{R})$ as a function of an independent variable  $X$. In the statistical modeling of such
situations, the \textit{multivariate simple linear model} appears as an interesting alternative. In a
wider context, the research can ask the same preceding hypothesis about the parallelism of a set of
lines, or the same intercept, or a common given intersection point.

Some preliminary results about matrix algebra, matrix multivariate distributions and general multivariate
linear model are showed, see Section \ref{sec:2}. By using likelihood rate and union-intersection
principles, Section \ref{sec:3} derive the multivariate statistics versions for the above mentioned
hypotheses: same intercept, parallelism and intersection in a known point. Also, these results are
extended to the elliptical case when the $x$'s are fixed or random. Section \ref{sec:4} applies the
developed theory in the context of agricultural entomology .

\section{Preliminary results}\label{sec:2}

A detailed discussion of the univariate linear model and related topics may be found in \citet{g:76} and
\citet{ds:81} and for the multivariate linear model see \citet{re:95} and \citet{s:84}, among many others.
For completeness, we shall introduce some notations, although in general we adhere to standard notation
forms.

\subsection{Notation, matrix algebra and matrix multivariate distribution.}
For our purposes: if $\mathbf{A}\in \Re^{n \times m}$ denotes a matrix, this is, $\mathbf{A}$ have $n$
rows and $m$ columns, then $\mathbf{A}'\in \Re^{m \times n}$ denotes its transpose matrix, and if
$\mathbf{A}\in \Re^{n \times n}$ has an inverse, it shall be denoted by $\mathbf{A}^{-1} \in \Re^{n
\times n}$. An identity matrix shall be denoted by $\mathbf{I}\in \Re^{n \times n}$, to specified the
size of the identity, we will use $\mathbf{I}_{n}$. A null matrix shall be denoted as $\mathbf{0} \in
\Re^{n\times m}$. A vector of ones shall be denoted by $\mathbf{1}\in \Re^{n}$. For all matrix
$\mathbf{A}\in \Re^{n \times m}$ exist $\mathbf{A}^{-} \in \Re^{m\times n}$ which is termed Moore-Penrose
inverse. Similarly given $\mathbf{A}\in \Re^{n \times m}$ exist $\mathbf{A}^{c} \in \Re^{m \times n}$
such that $\mathbf{A}\mathbf{A}^{c}\mathbf{A} = \mathbf{A}$, $\mathbf{A}^{c}$ is termed conditional
inverse. It is said $\mathbf{A}\in \Re^{n \times n}$ is symmetric matrix if $\mathbf{A} = \mathbf{A}'$
and if all their eigenvalues are positive the matrix $\mathbf{A}$ is said to be positive definite, which shall be
denoted as $\mathbf{A} > \mathbf{0}$. If $\mathbf{A} \in \Re^{n \times m}$ is writing in terms of its $m$
columns, $\mathbf{A} = (\mathbf{A}_{1}, \mathbf{A}_{2}, \dots, \mathbf{A}_{m})$, $\mathbf{A}_{j} \in
\Re^{n}$, $j = 1, 2 \dots, m$, $\vec(\mathbf{A}) \in \Re^{nm}$ denotes the vectorization of $\mathbf{A}$,
moreover, $\vec'(\mathbf{A}) = (\vec(\mathbf{A}))' = (\mathbf{A}'_{1}, \mathbf{A}'_{2}, \dots,
\mathbf{A}'_{m})$. Let $\mathbf{A} \in \Re^{r \times s}$ and $\mathbf{B} \in \Re^{n \times m}$, then
$\mathbf{A} \otimes \mathbf{B} \in \Re^{sn \times rm}$ denotes its Kronecker product. Given a null matrix
$\mathbf{A} \in \Re^{n \times n}$ with diagonal elements $a_{ii} \neq 0$ for at least one $i$, this shall
be denoted by $\mathbf{A} = \diag(a_{11}, a_{22}, \dots, a_{nn})$. Given $\mathbf{a} \in \Re^{n}$, a
vector, its Euclidean norm shall be defined as $||\mathbf{a}|| = \sqrt{\mathbf{a}'\mathbf{a}} =
\sqrt{a_{1}^{2}+a_{2}^{2}+ \cdots +a_{n}^{2}}$.

If a random matrix $\mathbf{Y} \in \Re^{n \times m}$ has a matrix multivariate normal distribution with
matrix mean $\E(\mathbf{X})= \bg{\mu} \in \Re^{n \times m}$ and covariance matrix $\cov (\vec
\mathbf{Y}') = \mathbf{\Theta} \otimes \mathbf{\Sigma}$, $\mathbf{\Theta} = \mathbf{\Theta}' \in \Re^{n
\times n}$ and $\mathbf{\Sigma} = \mathbf{\Sigma}' \in \Re^{m \times m}$ this fact shall be denoted as
$\mathbf{Y} \sim \mathcal{N}_{n \times m}(\bg{\mu}, \mathbf{\Theta} \otimes \mathbf{\Sigma})$. Observe
that, if $\mathbf{A} \in \Re^{n \times r}$, $\mathbf{B} \in \Re^{m \times s}$ and $\mathbf{C} \in \Re^{r
\times s}$ matrices of constants,
\begin{equation}\label{tln}
    \mathbf{A'YB}+\mathbf{C} \sim \mathcal{N}_{r \times s}(\bg{A'\mu B}+\mathbf{C}, \mathbf{A'\Theta A}
  \otimes \mathbf{B'\Sigma B}).
\end{equation}

Finally, consider that $\mathbf{Y} \sim \mathcal{N}_{n \times m}(\bg{\mu}, \mathbf{\Theta} \otimes
\mathbf{\Sigma})$ then the random matrix $\mathbf{V} =
\mathbf{Y}'\mathbf{\Theta}^{-1}\mathbf{Y}$ has a noncentral Wishart distribution with $n$ degrees of
freedom and non-centrality  parameter matrix  $\mathbf{\Omega} =
\mathbf{\Sigma}^{-1} \bg{\mu}'\mathbf{\Theta}^{-1} \bg{\mu}/2$. This fact shall be denoted as $\mathbf{V}
\sim \mathcal{W}_{m}(n, \mathbf{\Sigma}, \mathbf{\Omega})$. Observe that if $\bg{\mu} = \mathbf{0}$, then
$\mathbf{\Omega} = \mathbf{0}$, and $\mathbf{V}$ is said to have a central Wishart distribution and
$\mathcal{W}_{m}(n, \mathbf{\Sigma}, \mathbf{0}) \equiv \mathcal{W}_{m}(n, \mathbf{\Sigma})$, see
\citet{sk:79} and \citet{mh:05}.

\subsection{General multivariate linear model}

Consider the general multivariate linear model
\begin{equation}\label{glm}
    \mathbf{Y} = \mathbf{X} \bg{\beta} + \bg{\epsilon}
\end{equation}
where: $\mathbf{Y} \in \Re^{n \times q}$ is the matrix of the observed values; $\bg{\beta} \in \Re^{p
\times q}$ is the parameter matrix; $\mathbf{X} \in \Re^{n \times p}$ is the design matrix or the
regression matrix of rank $r \leq p$ and $n > p+q$; $\bg{\epsilon} \in \Re^{n \times q}$ is the error
matrix which has a matrix multivariate normal distribution, specifically  $\bg{\epsilon}\sim
\mathcal{N}_{n \times q}(\mathbf{0}, \mathbf{I}_{n} \otimes \bg{\Sigma})$, see \citet[p.430]{mh:05} and
$\bg{\Sigma} \in \Re^{q \times q}$, $\bg{\Sigma} > \mathbf{0}$. For this model, we want to test the
hypothesis
\begin{equation}\label{test}
    H_{0}: \mathbf{C} \bg{\beta}\mathbf{M} = \mathbf{0} \mbox{ versus } H_{a}: \mathbf{C} \bg{\beta}
    \mathbf{M} \neq \mathbf{0}
\end{equation}
where $\mathbf{C} \in \Re^{\nu_{\mathbf{H}} \times p}$ of rank $\nu_{\mathbf{H}} \leq r$ and $\mathbf{M}
\in \Re^{q \times g}$ of rank $g \leq q$. As in the univariate case,  the matrix $\mathbf{C}$ concerns to
the hypothesis among the elements of the parameter matrix columns, while the matrix  $\mathbf{M}$ allows
hypothesis among the different response parameters. The matrix $\mathbf{M}$ plays a role in profile
analysis, for example; in ordinary hypothesis testing it assumes the identity matrix, namely, $\mathbf{M} =
\mathbf{I}_{p}.$

Let $\mathbf{S}_{H}$ be the matrix of sums of squares and sums of products due to the hypothesis and let
$\mathbf{S}_{E}$ be the matrix of sums of squares and sums of products due to the error. These are
defined as
\begin{equation}\label{shse}
    \begin{array}{lcl}
      \mathbf{S}_{H} &=& (\mathbf{C} \widetilde{\bg{\beta}}\mathbf{M})' (\mathbf{C}
      (\mathbf{X}' \mathbf{X})^{c} \mathbf{C}')^{-1}(\mathbf{C} \widetilde{\bg{\beta}}
      \mathbf{M}) \\
     \mathbf{S}_{E} &=& \mathbf{M}'\mathbf{Y}'(\mathbf{I}_{n} - \mathbf{X}\mathbf{X}^{c})
    \mathbf{Y}\mathbf{M},
\end{array}
\end{equation}
respectively; where $\widetilde{\bg{\beta}} = \mathbf{X}^{c}\mathbf{Y}$. Note that, under the null
hypothesis, $\mathbf{S}_{H}$ has a  $g$-dimensional noncentral Wishart distribution with
$\nu_{\mathbf{H}}$ degrees of freedom and parameter matrix $\mathbf{M}'\bg{\Sigma\mathbf{M}}$ i.e.
$\mathbf{S}_{H} \sim \mathcal{W}_{g}(\nu_{\mathbf{H}}, \mathbf{M}'\bg{\Sigma\mathbf{M}})$; similarly
$\mathbf{S}_{E}$ has a $g$-dimensional Wishart distribution with $\nu_{\mathbf{E}}$ degrees of freedom
and parameter matrix $\mathbf{M}'\bg{\Sigma\mathbf{M}}$, i.e. $\mathbf{S}_{E} \sim
\mathcal{W}_{g}(\nu_{\mathbf{E}}, \mathbf{M}'\bg{\Sigma\mathbf{M}})$; specifically, $\nu_{\mathbf{H}}$
and $\nu_{\mathbf{E}}$ denote the degrees of freedom of the hypothesis and  the error, respectively. All
the results given below are true for $\mathbf{M} \neq \mathbf{I}_{q}$, just compute $\mathbf{S}_{H}$ and
$\mathbf{S}_{E}$ from (\ref{shse}) and replace $q$ by $g$. Now, let $\lambda_{1}, \cdots, \lambda_{s}$ be
the  $s = \min(\nu_{\mathbf{H}}, g)$ non null eigenvalues of the matrix
$\mathbf{S}_{H}\mathbf{S}_{E}^{-1}$ such that $0 < \lambda_{s}< \cdots < \lambda_{1} < \infty$ and let
$\theta_{1}, \cdots, \theta_{s}$ be the $s$ non null eigenvalues of the matrix
$\mathbf{S}_{H}(\mathbf{S}_{H} + \mathbf{S}_{E})^{-1}$ with $0 < \theta_{s}< \cdots < \theta_{1} < 1$;
here we note $\lambda_{i}= \theta_{i}/(1 - \theta_{i})$ and $\theta_{i}= \lambda_{i}/(1 + \lambda_{i})$,
$i = 1,\cdots, s$. Various authors have proposed a number of different criteria for testing the
hypothesis (\ref{test}). But it is known,  that all the tests can be expressed in terms of the
eigenvalues $\lambda's$ or $\theta's$, see for example \citet{k:83}. The  likelihood ratio test
statistics termed Wilks's $\Lambda$, given next, is one of such statistic.

The likelihood ratio test of size $\alpha$ of $H_{0}: \mathbf{C} \bg{\beta}\mathbf{M} = \mathbf{0} \mbox{
against } H_{a}: \mathbf{C} \bg{\beta}  \mathbf{M} \neq \mathbf{0}$ reject if $\Lambda \leq
\Lambda_{\alpha,q,\nu_{\mathbf{H}},\nu_{\mathbf{E}}}$, where
\begin{equation}\label{w}
    \Lambda = \frac{\left|\mathbf{S}_{\mathbf{E}}\right|}{\left|\mathbf{S}_{\mathbf{E}} + \mathbf{S}_{\mathbf{H}}\right|} =
  \prod_{i = 1}^{s}\frac{1}{\left(1+\lambda_{i}\right)} = \prod_{i = 1}^{s}\frac{1}{\left(1-\theta_{i}\right)}.
\end{equation}
Exact critical values of $\Lambda_{\alpha,q,\nu_{\mathbf{H}},\nu_{\mathbf{E}}}$ can be found in
\citet[Table A.9]{re:95} or \citet[Table 1]{k:83}.

\section{Test about $R$ multivariate simple linear models}\label{sec:3}

Consider the following $R$ multivariate simple linear models
\begin{equation}\label{gmslm}
    \mathbf{Y}_{r} = \mathbf{X}_{r} \bg{B}_{r} + \bg{\epsilon}_{r}
\end{equation}
$\mathbf{Y}_{r} \in \Re^{n_{r} \times q}$, $\mathbf{X}_{r}  \in \Re^{n_{r} \times 2}$ such that its rank
is 2; $\bg{B}_{r} \in \Re^{2 \times q}$, $n_{r} > 2$ and $n_{r} > q +2$ for all $r = 1,2,\dots,R$;
$\sum_{r=1}^{R} n_{r}=N$ and $\bg{\epsilon}_{r} \sim \mathcal{N}_{n_{r} \times q}(\mathbf{0},
\mathbf{I}_{n_{r}} \otimes \mathbf{\Sigma})$, with $\mathbf{\Sigma} > \mathbf{0}$; where
$$
  \bg{B}_{r} =
  \left (
     \begin{array}{cccc}
       \alpha_{r1} & \alpha_{r2} & \cdots & \alpha_{rq} \\
       \beta_{r1} & \beta_{r2} & \cdots & \beta_{rq1}
     \end{array}
  \right)=
  \left (
     \begin{array}{c}
       \bg{\alpha}'_{r} \\
       \bg{\beta}'_{r}
     \end{array}
  \right), \quad
  \mathbf{X_{r}} =
  \left (
     \begin{array}{cc}
       1 & x_{r1} \\
       1 & x_{r2} \\
       \vdots & \vdots \\
       1 & x_{rn_{r}}
     \end{array}
  \right ) = \left (\mathbf{1}_{n_{r}} \mathbf{x}_{r} \right ).
$$

We are interested in the following hypotheses
\begin{description}
  \item[i)] $H_{0}: \bg{\beta}_{1} = \bg{\beta}_{2} = \cdots = \bg{\beta}_{R}$, that is, the set of lines
  are parallel;
  \item[ii)] $H_{0}: \bg{\alpha}_{1} = \bg{\alpha}_{2} = \cdots = \bg{\alpha}_{R}$, that is, the set of lines
  have a common vector intercept;
  \item[iii)] $H_{0}: \bg{\alpha}_{1} + \bg{\beta}_{1}x_{0} = \bg{\alpha}_{2} + \bg{\beta}_{2}x_{0} = \cdots =
  \bg{\alpha}_{R} + \bg{\beta}_{R}x_{0}$, ($x_{0}$ known), that is, the set of lines intersect at the $x$ value
   $x_{0}$ which is specified in advance.
\end{description}

First observe that the $R$ multivariate simple linear models can be written as a general multivariate
linear model, $\mathbb{Y} = \mathbb{XB} + \mathbb{E}$, such that {\small
$$
  \build{\mathbb{Y}}{}{N \times q} =
  \left (
     \begin{array}{c}
       \mathbf{Y}_{1} \\
       \mathbf{Y}_{2} \\
       \vdots \\
       \mathbf{Y}_{R}
     \end{array}
  \right ),
  \build{\mathbb{X}}{}{N \times 2R} =
  \left(
     \begin{array}{cccc}
       \mathbf{X}_{1} & \mathbf{0} & \cdots & \mathbf{0} \\
       \mathbf{0} & \mathbf{X}_{2} & \cdots & \mathbf{0} \\
       \vdots & \vdots & \ddots & \vdots \\
       \mathbf{0} & \mathbf{0} & \cdots & \mathbf{X}_{R}
     \end{array}
  \right ),
  \build{\mathbb{B}}{}{2R \times q} =
  \left (
     \begin{array}{c}
       \mathbf{B}_{1} \\
       \mathbf{B}_{2} \\
       \vdots \\
       \mathbf{B}_{R}
     \end{array}
  \right ),
  \build{\mathbb{E}}{}{N \times q} =
  \left (
     \begin{array}{c}
       \mathbf{E}_{1} \\
       \mathbf{E}_{2} \\
       \vdots \\
       \mathbf{E}_{R}
     \end{array}
  \right ).
$$}
Namely, $\mathbb{E} \sim \mathcal{N}_{N \times q}(\mathbf{0}, \mathbf{I}_{NR} \otimes
\mathbf{\Sigma})$. Thus
$$
  \mathbb{X}'\mathbb{X} =
  \left(
     \begin{array}{cccc}
       \mathbf{X}'_{1}\mathbf{X}_{1} & \mathbf{0} & \cdots & \mathbf{0} \\
       \mathbf{0} & \mathbf{X}'_{2}\mathbf{X}_{2} & \cdots & \mathbf{0} \\
       \vdots & \vdots & \ddots & \vdots \\
       \mathbf{0} & \mathbf{0} & \cdots & \mathbf{X}'_{R}\mathbf{X}_{R}
     \end{array}
  \right ),
  \quad
  \mathbb{X}'\mathbb{Y} =
  \left (
     \begin{array}{c}
       \mathbf{X}'_{1}\mathbf{Y}_{1} \\
       \mathbf{X}'_{2}\mathbf{Y}_{2} \\
       \vdots \\
       \mathbf{X}'_{R}\mathbf{Y}_{R}
     \end{array}
  \right ),
$$
and by \citet[Theorem 1.3.1, p. 19]{g:76}
$$
  (\mathbb{X}'\mathbb{X})^{-1} =
  \left(
     \begin{array}{cccc}
       (\mathbf{X}'_{1}\mathbf{X}_{1})^{-1} & \mathbf{0} & \cdots & \mathbf{0} \\
       \mathbf{0} & (\mathbf{X}'_{2}\mathbf{X}_{2})^{-1} & \cdots & \mathbf{0} \\
       \vdots & \vdots & \ddots & \vdots \\
       \mathbf{0} & \mathbf{0} & \cdots & (\mathbf{X}'_{R}\mathbf{X}_{R})^{-1}
     \end{array}
  \right ).
 $$
Therefore by \citet[Theorem 10.1.1, p. 430]{mh:05}, see also \citet[equation 10.46, p. 339]{re:95},
$$
  \widehat{\mathbb{B}} = (\mathbb{X}'\mathbb{X})^{-1}\mathbb{X}'\mathbb{Y} =
  \left(
     \begin{array}{cccc}
       (\mathbf{X}'_{1}\mathbf{X}_{1})^{-1} & \mathbf{0} & \cdots & \mathbf{0} \\
       \mathbf{0} & (\mathbf{X}'_{2}\mathbf{X}_{2})^{-1} & \cdots & \mathbf{0} \\
       \vdots & \vdots & \ddots & \vdots \\
       \mathbf{0} & \mathbf{0} & \cdots & (\mathbf{X}'_{R}\mathbf{X}_{R})^{-1}
     \end{array}
  \right )
  \left (
     \begin{array}{c}
       \mathbf{X}'_{1}\mathbf{Y}_{1} \\
       \mathbf{X}'_{2}\mathbf{Y}_{2} \\
       \vdots \\
       \mathbf{X}'_{R}\mathbf{Y}_{R}
     \end{array}
  \right )
$$
$$
  =
  \left (
     \begin{array}{c}
       (\mathbf{X}'_{1}\mathbf{X}_{1})^{-1}\mathbf{X}'_{1}\mathbf{Y}_{1} \\
       (\mathbf{X}'_{2}\mathbf{X}_{2})^{-1}\mathbf{X}'_{2}\mathbf{Y}_{2} \\
       \vdots \\
       (\mathbf{X}'_{R}\mathbf{X}_{R})^{-1}\mathbf{X}'_{R}\mathbf{Y}_{R}
     \end{array}
  \right ) =
  \left (
     \begin{array}{c}
       \mathbf{X}_{1}^{-}\mathbf{Y}_{1} \\
       \mathbf{X}_{2}^{-}\mathbf{Y}_{2} \\
       \vdots \\
       \mathbf{X}_{R}^{-}\mathbf{Y}_{R}
     \end{array}
  \right ),
$$
that is, $\mathbf{B}_{r} = (\mathbf{X}'_{r}\mathbf{X}_{r})^{-1}\mathbf{X}'_{r}\mathbf{Y}_{r} =
\mathbf{X}_{r}^{-}\mathbf{Y}_{r}$ and
\begin{eqnarray}
  \mathbf{S}_{\mathbf{E}} &=& (\mathbb{Y} - \widehat{\mathbb{B}}'\mathbb{X}'\mathbb{Y})'(\mathbb{Y} -
  \widehat{\mathbb{B}}'\mathbb{X}'\mathbb{Y}) = \sum_{r = 1}^{R}\mathbf{Y}_{r}\mathbf{Y}_{r} - \sum_{r = 1}^{R}
  \mathbf{B}'_{r}\mathbf{X}'_{r}\mathbf{Y}_{r} \nonumber \\ \label{SE}
    &=& \sum_{r = 1}^{R}\mathbf{Y}'_{r}\left(\mathbf{I}_{n_{r}}-\mathbf{X}_{r}\mathbf{X}_{r}^{-}\right)
    \mathbf{Y}_{r} \in \Re^{q \times q}.
\end{eqnarray}
Hence by \citet[Theorem 10.1.2, p. 431]{mh:05} and \citet[equation 6.3.8, p. 171]{sk:79} we have that
$\widehat{\mathbb{B}} \sim \mathcal{N}_{2R \times q}\left(\mathbb{B},
\left(\mathbb{X}'\mathbb{X}\right)^{-1} \otimes \mathbf{\Sigma}\right)$.

Note that
$$
  \widehat{\mathbf{B}}_{r} = \left(\build{\mathbf{0}}{}{1} \build{\cdots}{}{\cdots} \build{\mathbf{I}_{2}}{}{r} \build{\cdots}{}{\cdots}
  \build{\mathbf{0}}{}{R}\right)
  \widehat{\mathbb{B}} =
  \left(\build{\mathbf{0}}{}{1} \build{\cdots}{}{\cdots} \build{\mathbf{I}_{2}}{}{r} \build{\cdots}{}{\cdots}
  \build{\mathbf{0}}{}{R}\right)
  \left (
     \begin{array}{c}
       \widehat{\mathbf{B}}_{1} \\
       \widehat{\mathbf{B}}_{2} \\
       \vdots \\
       \widehat{\mathbf{B}}_{R}
     \end{array}
  \right ),
$$
thus, $\widehat{\mathbf{B}}_{r} \sim \mathcal{N}_{2 \times q}\left(\mathbf{B}_{r},
\left(\mathbf{X}'_{r}\mathbf{X}_{r}\right)^{-1} \otimes \mathbf{\Sigma}\right)$ and
$\mathbf{S}_{\mathbf{E}} \sim \mathcal{W}_{q}(N-2R, \mathbf{\Sigma})$. Observe that
$\widehat{\mathbf{B}}_{r}$ is computed from the data for the $r$th model and $\mathbf{S}_{\mathbf{E}}$ is
computed by pooling the estimators of $\mathbf{S}_{\mathbf{E}}$ from each model
$\mathbf{S}_{\mathbf{E}_{r}}$.

Generalising the results in \citet[Example 6.2.1, pp. 177-178]{g:76} and using matrix notation in the
multivariate case, we have
$$
  \widehat{\mathbf{B}}_{r} =
  \left (
     \begin{array}{c}
       \widehat{\bg{\alpha}}'_{r} \\
       \widehat{\bg{\beta}}'_{r}
     \end{array}
  \right ) =
  \left (
     \begin{array}{c}
       \left (\bar{\mathbf{Y}}_{r} - \widehat{\bg{\beta}}_{r} \bar{x}_{r}\right )' \\
       \left (\displaystyle\frac{\mathbf{Y}'_{r}\left (\mathbf{I}_{n_{r}} - \mathbf{1}_{n_{r}} \mathbf{1}'_{n_{r}}/n_{r}\right )
       \mathbf{x}_{r}}{\|\left (\mathbf{I}_{n_{r}} - \mathbf{1}_{n_{r}} \mathbf{1}'_{n_{r}}/n_{r}\right )
       \mathbf{x}_{r}\|^{2}} \right )'
     \end{array}
  \right ),
$$
where $\bar{\mathbf{Y}}_{r} = \mathbf{Y}'_{r}\mathbf{1}_{n_{r}}/n_{r}$ and $\bar{x}_{r} =
\mathbf{x}'_{r}\mathbf{1}_{n_{r}}/n_{r}$, $r = 1, 2 , \dots, R$. And
$$
  \mathbf{S}_{\mathbf{E}} = \sum_{r = 1}^{R}\mathbf{S}_{\mathbf{E}_{r}},
$$
where
$$
  \mathbf{S}_{\mathbf{E}_{r}} = \mathbf{Y}'_{r}\left (\mathbf{I}_{n_{r}} - \mathbf{1}_{n_{r}} \mathbf{1}'_{n_{r}}/n_{r}\right )
       \mathbf{Y}_{r} \hspace{7cm}
$$
$$
\hspace{3.5cm}
  - \frac{\mathbf{Y}'_{r}\left (\mathbf{I}_{n_{r}} - \mathbf{1}_{n_{r}} \mathbf{1}'_{n_{r}}/n_{r}\right )
       \mathbf{x}_{r} \mathbf{x}'_{r}\left (\mathbf{I}_{n_{r}} - \mathbf{1}_{n_{r}} \mathbf{1}'_{n_{r}}/n_{r}\right )
       \mathbf{Y}_{r}}{\|\left (\mathbf{I}_{n_{r}} - \mathbf{1}_{n_{r}} \mathbf{1}'_{n_{r}}/n_{r}\right )
       \mathbf{x}_{r}\|^{2}}.
$$
\begin{theorem}\label{teo1}
Given the $R$ multivariate simple linear models (\ref{gmslm}) and known constants $a$ and $b$, the
likelihood ratio test of size $\alpha$ of
$$
  H_{0}: a\bg{\alpha}_{1} + b\bg{\beta}_{1} = a\bg{\alpha}_{2} + b\bg{\beta}_{2} = \cdots =
  a\bg{\alpha}_{R} + b\bg{\beta}_{R}
$$
versus
$$
  H_{1}: \mbox{ at least ane equality is an inequality,}
$$
is given by
\begin{equation}\label{wilks}
    \Lambda = \frac{|\mathbf{S}_{E}|}{|\mathbf{S}_{E} + \mathbf{S}_{H}|}
\end{equation}
Where
\begin{eqnarray}\label{se}
  \mathbf{S}_{E} &=& \sum_{r = 1}^{R}\mathbf{Y}'_{r}(\mathbf{I}_{n_{r}} - \mathbf{X}_{r}
        \mathbf{X}^{-}_{r}) \mathbf{Y}_{r},\\
  \label{sh}
  \mathbf{S}_{H} &=& \left(\mathbf{D}^{1/2}\mathbf{Z}\right )'\left(\mathbf{I}_{R} -
  \mathbf{D}^{1/2}\mathbf{1}_{R}\mathbf{1}'_{R}\mathbf{D}^{1/2}/\mathbf{1}'_{R}\mathbf{D}\mathbf{1}_{R}\right)
  \left(\mathbf{D}^{1/2}\mathbf{Z}\right ),
\end{eqnarray}
where $\mathbf{D} = \diag(d_{11}, d_{22}, \dots, d_{RR})$,
$$
  d_{rr} = \frac{n_{r} \|\left (\mathbf{I}_{n_{r}} - \mathbf{1}_{n_{r}} \mathbf{1}'_{n_{r}}/n_{r}\right )
       \mathbf{x}_{r}\|^{2}}{\|(a\mathbf{x}_{r}-b\mathbf{1}_{n_{r}})\|^{2}}
$$
and
$$
  \mathbf{Z} = \left (a
  \left (
     \begin{array}{c}
       \widehat{\bg{\alpha}}'_{1} \\
       \widehat{\bg{\alpha}}'_{2} \\
       \vdots \\
       \widehat{\bg{\alpha}}'_{R}
     \end{array}
  \right ) +  b
  \left (
     \begin{array}{c}
       \widehat{\bg{\beta}}'_{1} \\
       \widehat{\bg{\beta}}'_{2} \\
       \vdots \\
       \widehat{\bg{\beta}}'_{R}
     \end{array}
  \right )
  \right ) \in \Re^{R \times q}.
$$

We reject $H_{0}$ if
$$
  \Lambda \leq \Lambda_{\alpha, 1, \nu_{\mathbf{H}}, \nu_{\mathbf{E}}},
$$
where $\nu_{\mathbf{H}} = (R-1)$, $\nu_{\mathbf{E}} = N-2R$.
\end{theorem}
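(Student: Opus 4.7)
The plan is to recast $H_{0}$ as an instance of the general MANOVA test (\ref{test}) so that the Wilks $\Lambda$ of Section \ref{sec:2} applies, and then to simplify the resulting hypothesis matrix to the compact form (\ref{sh}). Take $\mathbf{M}=\mathbf{I}_{q}$ and let $\mathbf{C}\in\Re^{(R-1)\times 2R}$ be the block matrix whose $(r-1)$th row carries $-(a,b)$ in the first $2$-block and $(a,b)$ in the $r$th $2$-block, with zeros elsewhere ($r=2,\dots,R$). Then $\mathbf{C}\mathbb{B}$ stacks the $R-1$ differences $a\bg{\alpha}_{r}'+b\bg{\beta}_{r}'-a\bg{\alpha}_{1}'-b\bg{\beta}_{1}'$, so $\mathbf{C}\mathbb{B}=\mathbf{0}$ is exactly $H_{0}$; moreover $\rank(\mathbf{C})=R-1=\nu_{\mathbf{H}}$, and $\rank(\mathbb{X})=2R$ gives $\nu_{\mathbf{E}}=N-2R$. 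The error sum of squares (\ref{se}) is inherited verbatim from the block-diagonal derivation already given in (\ref{SE}).

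For $\mathbf{S}_{H}$ I will work through the sufficient statistic $\widehat{\mathbf{u}}_{r}'=(a,b)\widehat{\mathbf{B}}_{r}=a\widehat{\bg{\alpha}}_{r}'+b\widehat{\bg{\beta}}_{r}'$, i.e.\ the $r$th row of $\mathbf{Z}$. Applying the linear-transformation property (\ref{tln}) to $\widehat{\mathbf{B}}_{r}\sim\mathcal{N}_{2\times q}(\mathbf{B}_{r},(\mathbf{X}_{r}'\mathbf{X}_{r})^{-1}\otimes\bg{\Sigma})$ yields independent rows with $\cov(\widehat{\mathbf{u}}_{r})=c_{r}\bg{\Sigma}$, where $c_{r}=(a,b)(\mathbf{X}_{r}'\mathbf{X}_{r})^{-1}(a,b)'$. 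Since $\mathbf{X}_{r}=(\mathbf{1}_{n_{r}},\mathbf{x}_{r})$, inverting the $2\times 2$ matrix $\mathbf{X}_{r}'\mathbf{X}_{r}$ explicitly gives
$$
  c_{r}=\frac{\|a\mathbf{x}_{r}-b\mathbf{1}_{n_{r}}\|^{2}}{n_{r}\,\|(\mathbf{I}_{n_{r}}-\mathbf{1}_{n_{r}}\mathbf{1}_{n_{r}}'/n_{r})\mathbf{x}_{r}\|^{2}}=d_{rr}^{-1},
$$
so that $\mathbf{D}^{1/2}\mathbf{Z}\sim\mathcal{N}_{R\times q}(\mathbf{D}^{1/2}\mathbf{M}_{0},\mathbf{I}_{R}\otimes\bg{\Sigma})$, with the rows of $\mathbf{M}_{0}$ equal to $a\bg{\alpha}_{r}'+b\bg{\beta}_{r}'$. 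Under $H_{0}$ this mean reduces to $\mathbf{D}^{1/2}\mathbf{1}_{R}\bg{\mu}_{0}'$, which lies in the one-dimensional column span of $\mathbf{D}^{1/2}\mathbf{1}_{R}$.

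The problem has thus collapsed to a weighted one-sample MANOVA on the rescaled matrix $\mathbf{D}^{1/2}\mathbf{Z}$: the hypothesis sum of squares is the residual after projecting out that one-dimensional mean direction, giving exactly (\ref{sh}). The projection $\mathbf{I}_{R}-\mathbf{D}^{1/2}\mathbf{1}_{R}\mathbf{1}_{R}'\mathbf{D}^{1/2}/\mathbf{1}_{R}'\mathbf{D}\mathbf{1}_{R}$ is symmetric idempotent of rank $R-1$, so the Wishart fact at the end of Section \ref{sec:2} gives $\mathbf{S}_{H}\sim\mathcal{W}_{q}(R-1,\bg{\Sigma})$ under $H_{0}$, independently of $\mathbf{S}_{E}\sim\mathcal{W}_{q}(N-2R,\bg{\Sigma})$; substituting into (\ref{w}) produces the stated critical region with $\nu_{\mathbf{H}}=R-1$ and $\nu_{\mathbf{E}}=N-2R$.

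The principal technical obstacle is justifying the passage from the abstract quadratic form $(\mathbf{C}\widehat{\mathbb{B}})'[\mathbf{C}(\mathbb{X}'\mathbb{X})^{-1}\mathbf{C}']^{-1}(\mathbf{C}\widehat{\mathbb{B}})$ to the one-sample expression in (\ref{sh}). Conceptually this is the invariance of $\mathbf{S}_{\mathbf{H}}$ under reparametrisation of the contrast, exploiting that $(a,b)$ exhausts the hypothesis contrast on each $\mathbf{B}_{r}$ so that $\mathbf{C}$ factors through the reduction $\widehat{\mathbb{B}}\mapsto\mathbf{Z}$. Should a purely mechanical verification be preferred, one can compute $\mathbf{C}(\mathbb{X}'\mathbb{X})^{-1}\mathbf{C}'=d_{11}^{-1}\mathbf{1}_{R-1}\mathbf{1}_{R-1}'+\diag(d_{22}^{-1},\dots,d_{RR}^{-1})$, invert via Sherman--Morrison, expand $\mathbf{C}\widehat{\mathbb{B}}$ in blocks, and recognise the resulting expression as (\ref{sh}) after collecting terms.
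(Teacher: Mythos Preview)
Your proposal is correct and follows essentially the same route as the paper: both reduce $H_{0}$ to the general MANOVA hypothesis (\ref{test}) via an appropriate $\mathbf{C}$, then bypass the abstract $\mathbf{S}_{H}$ formula by working directly with the rows $\mathbf{z}_{r}'=(a,b)\widehat{\mathbf{B}}_{r}$, computing $\cov(\mathbf{z}_{r})=d_{rr}^{-1}\bg{\Sigma}$ from the explicit $2\times 2$ inverse of $\mathbf{X}_{r}'\mathbf{X}_{r}$, standardising to $\mathbf{D}^{1/2}\mathbf{Z}\sim\mathcal{N}_{R\times q}(\cdot,\mathbf{I}_{R}\otimes\bg{\Sigma})$, and applying the idempotent projector of rank $R-1$. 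The only cosmetic differences are your choice of contrast matrix (differences from the first group rather than consecutive differences) and your final paragraph on reconciling (\ref{sh}) with the abstract quadratic form, which the paper does not pursue.
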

\begin{proof}
This theorem is a special case of the results obtained for testing the hypotheses (\ref{test}) and it can be
proved by selecting the proper $\mathbf{C}$ and $\mathbf{M}$ matrices into Equation (\ref{shse})
\footnote{In our case taking, $\mathbf{M} = \mathbf{I}_{q}$ and
$$
  \mathbf{C} =
  \left (
     \begin{array}{ccccccccccc}
       a & b & -a & -b & 0 & 0 & \cdots & 0 & 0 & 0 & 0\\
       0 & 0 & a & b & -a & -b & \cdots & 0 & 0 & 0 & 0\\
       \vdots & \vdots & \vdots & \vdots & \vdots & \vdots & \ddots & \vdots & \vdots & \vdots & \vdots\\
       0 & 0 & 0 & 0 & 0 & 0 & \cdots & a & b & -a & -b
     \end{array}
  \right ) \in \Re^{R-1 \times 2R},
$$
into to Equation \ref{shse}, the desired result is obtained.}. Alternatively we extend the proof in
\citet[Theorem 8.6.1, p. 288]{g:76} for an univariate case into the multivariate case. The result follows from
(\ref{w}), we just need to  define explicit matrices of sums of squares and products $\mathbf{S}_{E}$
and $\mathbf{S}_{H}$. First  define the random vectors $\mathbf{z}_{r} = a
\widehat{\bg{\alpha}}_{r} + b \widehat{\bg{\beta}}_{r}$, $r = 1, 2, \dots, R$, where $a$ and $b$ are
known constants to be define later. Hence, given that $\widehat{\mathbf{B}}_{r} \sim \mathcal{N}_{2
\times q}\left(\mathbf{B}_{r}, \left(\mathbf{X}'_{r}\mathbf{X}_{r}\right)^{-1} \otimes
\mathbf{\Sigma}\right)$, we have
$$
  \E(\mathbf{z}_{r}) = \E\left(a \widehat{\bg{\alpha}}_{r} + b \widehat{\bg{\beta}}_{r}\right ) =
   a \bg{\alpha}_{r} + b \bg{\beta}_{r}.
$$
Also note that,
$$
  \mathbf{z}_{r} = \widehat{\mathbf{B}}'_{r}
  \left (
     \begin{array}{c}
       a \\
       b
     \end{array}
  \right ) = a \widehat{\bg{\alpha}}_{r} + b \widehat{\bg{\beta}}_{r},
$$
thus
\begin{eqnarray*}
  \cov(\mathbf{z}_{r}) &=& \cov(\mathbf{z}_{r}) = \cov\left(\vec \widehat{\mathbf{B}}'_{r}
  \left (
     \begin{array}{c}
       a \\
       b
     \end{array}
  \right )\right )\\
  &=& \cov\left(\left(
  \left (
     \begin{array}{c}
       a \\
       b
     \end{array}
  \right )' \otimes \mathbf{I}_{q} \right ) \vec \widehat{\mathbf{B}}'_{r}
    \right ) \\
   &=& \left((a, b) \otimes \mathbf{I}_{q} \right ) \left( \left(\mathbf{X}'_{r}\mathbf{X}_{r}\right )^{-1}
   \otimes \mathbf{\Sigma}\right ) \left(
   \left (
     \begin{array}{c}
       a \\
       b
     \end{array}
  \right ) \otimes  \mathbf{I}_{q} \right ) \\
   &=& (a, b)\left(\mathbf{X}'_{r}\mathbf{X}_{r}\right )^{-1}
   \left (
     \begin{array}{c}
       a \\
       b
     \end{array}
  \right )  \otimes \mathbf{\Sigma}\\
   &=& d_{rr}^{-1}\otimes \mathbf{\Sigma} = d_{rr}^{-1}\mathbf{\Sigma}
\end{eqnarray*}
With
\begin{eqnarray*}
  d_{rr}^{-1} &=& (a, b)\left(\mathbf{X}'_{r}\mathbf{X}_{r}\right )^{-1}
   \left (
     \begin{array}{c}
       a \\
       b
     \end{array}
  \right ) \\
   &=& (a, b)\left(
     \begin{array}{cc}
       \|\mathbf{1}_{nr}\|^{2} & \mathbf{1}'_{nr}\mathbf{x}_{r} \\
       \mathbf{x}'_{r}\mathbf{1}_{nr} & \|\mathbf{x}_{r}\|^{2}
     \end{array}
  \right )^{-1}
   \left (
     \begin{array}{c}
       a \\
       b
     \end{array}
  \right ) \\
   &=& \frac{1}{n_{r} \|\left (\mathbf{I}_{n_{r}} - \mathbf{1}_{n_{r}} \mathbf{1}'_{n_{r}}/n_{r}\right )
       \mathbf{x}_{r}\|^{2}}(a, b)\left(
     \begin{array}{cc}
       \|\mathbf{x}_{r}\|^{2} & -\mathbf{1}'_{nr}\mathbf{x}_{r} \\
       -\mathbf{x}'_{r}\mathbf{1}_{nr} & n_{r}
     \end{array}
  \right )^{-1}
   \left (
     \begin{array}{c}
       a \\
       b
     \end{array}
  \right )\\
  &=& \frac{\|'(a\mathbf{x}_{r}-b\mathbf{1}_{n_{r}})\|^{2}}
      {n_{r} \|\left (\mathbf{I}_{n_{r}} - \mathbf{1}_{n_{r}} \mathbf{1}'_{n_{r}}/n_{r}\right )
      \mathbf{x}_{r}\|^{2}},
\end{eqnarray*}
Therefore
$$
  \mathbf{z}_{r}  = a \widehat{\bg{\alpha}}_{r} + b \widehat{\bg{\beta}}_{r} \sim \mathcal{N}_{q} \left (
  a \bg{\alpha}_{r} + b \bg{\beta}_{r}, d_{rr}^{-1} \mathbf{\Sigma}\right ).
$$
Now, consider the random matrix $\mathbf{Z}$ defined by
$$
  \mathbf{Z} =
  \left (
     \begin{array}{c}
       \mathbf{z}'_{1} \\
       \mathbf{z}'_{2} \\
       \vdots \\
       \mathbf{z}'_{R}
     \end{array}
  \right ) =
  \left (a
  \left (
     \begin{array}{c}
       \widehat{\bg{\alpha}}'_{1} \\
       \widehat{\bg{\alpha}}'_{2} \\
       \vdots \\
       \widehat{\bg{\alpha}}'_{R}
     \end{array}
  \right ) +  b
  \left (
     \begin{array}{c}
       \widehat{\bg{\beta}}'_{1} \\
       \widehat{\bg{\beta}}'_{2} \\
       \vdots \\
       \widehat{\bg{\beta}}'_{R}
     \end{array}
  \right )
  \right )\in \Re^{R \times q}
$$
Thus
$$
  \E(\mathbf{Z}) = \left (a
  \left (
     \begin{array}{c}
       \bg{\alpha}'_{1} \\
       \bg{\alpha}'_{2} \\
       \vdots \\
       \bg{\alpha}'_{R}
     \end{array}
  \right ) +  b
  \left (
     \begin{array}{c}
       \bg{\beta}'_{1} \\
       \bg{\beta}'_{2} \\
       \vdots \\
       \bg{\beta}'_{R}
     \end{array}
  \right )
  \right )
$$
and
$$
  \cov(\vec \mathbf{Z}') = \cov((\mathbf{z}'_{1}, \mathbf{z}'_{2}, \dots \mathbf{z}'_{R})') = \mathbf{D}^{-1}
  \otimes \mathbf{\Sigma},
$$
where $\mathbf{D} = \diag(d_{11}, d_{22}, \dots,d_{RR})$. Thus
$$
  \mathbf{Z} \sim \mathcal{N}_{R \times q} \left (a
  \left (
     \begin{array}{c}
       \widehat{\bg{\alpha}}'_{1} \\
       \widehat{\bg{\alpha}}'_{2} \\
       \vdots \\
       \widehat{\bg{\alpha}}'_{R}
     \end{array}
  \right ) +  b
  \left (
     \begin{array}{c}
       \widehat{\bg{\beta}}'_{1} \\
       \widehat{\bg{\beta}}'_{2} \\
       \vdots \\
       \widehat{\bg{\beta}}'_{R}
     \end{array}
  \right ), \mathbf{D}^{-1} \otimes \mathbf{\Sigma} \right ),
$$
furthermore
$$
  \mathbf{D}^{1/2}\mathbf{Z} \sim \mathcal{N}_{R \times q} \left (\mathbf{D}^{1/2} \left (a
  \left (
     \begin{array}{c}
       \widehat{\bg{\alpha}}'_{1} \\
       \widehat{\bg{\alpha}}'_{2} \\
       \vdots \\
       \widehat{\bg{\alpha}}'_{R}
     \end{array}
  \right ) +  b
  \left (
     \begin{array}{c}
       \widehat{\bg{\beta}}'_{1} \\
       \widehat{\bg{\beta}}'_{2} \\
       \vdots \\
       \widehat{\bg{\beta}}'_{R}
     \end{array}
  \right ) \right ), \mathbf{I}_{R} \otimes \mathbf{\Sigma} \right ).
$$
Consider the constant matrix $\left (\mathbf{I}_{R}- \mathbf{D}^{1/2}\mathbf{1}_{R} \mathbf{1}'_{R}
\mathbf{D}^{1/2} /\mathbf{1}'_{R}\mathbf{D}\mathbf{1}_{R}\right )$, which is symmetric and idempotent.
Then
$$
  \mathbf{S}_{\mathbf{H}} = \left(\mathbf{D}^{1/2} \mathbf{Z}\right)'\left (\mathbf{I}_{R}- \mathbf{D}^{1/2}\mathbf{1}_{R} \mathbf{1}'_{R}
\mathbf{D}^{1/2} /\mathbf{1}'_{R}\mathbf{D}\mathbf{1}_{R}\right ) \left(\mathbf{D}^{1/2}
\mathbf{Z}\right),
$$
moreover, $\mathbf{S}_{\mathbf{H}}$ has a Wishart distribution and is independently distributed of
$\mathbf{S}_{\mathbf{E}}$ (see Equation (\ref{SE})), where  $\mathbf{S}_{\mathbf{H}} \sim
\mathcal{W}_{q}(R-1,\mathbf{\Sigma}, \mathbf{\Omega})$ and $\mathbf{S}_{\mathbf{E}} \sim
\mathcal{W}_{q}(N-2R,\mathbf{\Sigma})$; in addition,
$$
  \mathbf{\Omega} = \frac{1}{2}\mathbf{\Sigma}^{-1}\left(\mathbf{D}^{1/2}
  \E(\mathbf{Z}) \right)'
  \left (\mathbf{I}_{R}- \mathbf{D}^{1/2}\mathbf{1}_{R} \mathbf{1}'_{R}
  \mathbf{D}^{1/2} /\mathbf{1}'_{R}\mathbf{D}\mathbf{1}_{R}\right )
  \left(\mathbf{D}^{1/2}
  \E(\mathbf{Z}) \right)
$$
and observe that $\mathbf{\Omega}= \mathbf{0}$ if an only if $a\bg{\alpha}_{1} + b\bg{\beta}_{1} =
a\bg{\alpha}_{2} + b\bg{\beta}_{2} = \cdots = a\bg{\alpha}_{R} + b\bg{\beta}_{R}$. Which is the
desired result.
\end{proof}

As we mentioned before, different test statistics have been proposed for verifying the hypothesis
(\ref{test}). Next we propose three of them in our particular case.

\begin{theorem}\label{teo2}
Given the $R$ multivariate simple linear models (\ref{gmslm}) and known constants $a$ and $b$, the
union-intersection test, Pillai test and Lawley-Hotelling test of size $\alpha$ of
$$
  H_{0}: a\bg{\alpha}_{1} + b\bg{\beta}_{1} = a\bg{\alpha}_{2} + b\bg{\beta}_{2} = \cdots =
  a\bg{\alpha}_{R} + b\bg{\beta}_{R}
$$
versus
$$
  H_{1}: \mbox{ at least ane equality is an inequality,}
$$
are given respectively by
\begin{enumerate}
  \item \begin{equation}\label{roy}
       \theta_{1} = \frac{\lambda_{1}}{1 + \lambda_{1}}
    \end{equation}
    which is termed Roy's largest root test. Where $\lambda_{1}$ is the maximum eigenvalue of $\left
    (\mathbf{S}_{H}\mathbf{S}_{E}^{-1}\right )$, where $\mathbf{S}_{H}$ and $\mathbf{S}_{E}$ are given by
    (\ref{sh}) and (\ref{se}), respectively. We reject $H_{0}$ if $\theta \geq \theta_{\alpha, s,m,h}$. Exact critical
    values of $\theta_{\alpha, s,m,h}$ are found in \citet[Table A.10]{re:95} or \citet[Tables 2, 4 and
    5]{k:83}.
  \item \begin{equation}\label{Pillai}
            V^{(s)} = \tr[\mathbf{S}_{H}(\mathbf{S}_{E} + \mathbf{S}_{H})^{-1}] =
            \sum_{i = 1}^{s}\frac{\lambda_{i}}{1+\lambda_{i}} = \sum_{i = 1}^{s}
            \theta_{i}
        \end{equation}
        This way we reject $H_{0}$ if
        $$
          V^{(s)} \geq V^{(s)}_{\alpha, s,m,h},
        $$
        where the exact critical values of $V^{(s)}_{\alpha, s,m,h}$ are found in
        \citet[Table A.11]{re:95} or \citet[Table 7]{k:83}.
  \item \begin{equation}\label{L-H}
            U^{(s)} = \tr[\mathbf{S}_{H}\mathbf{S}_{E}^{-1}] = \sum_{i = 1}^{s}
            \lambda_{i} = \sum_{i = 1}^{s}\frac{\theta_{i}}{1-\theta_{i}}.
        \end{equation}
        We reject $H_{0}$ if
        $$
          U^{(s)} \geq U^{(s)}_{\alpha, s,m,h}.
        $$
        The upper percentage points, $U^{(s)}_{\alpha, s,m,h}$, are given in
        \citet[Table 6]{k:83}.
\end{enumerate}
The parameters $s$, $m$ and $h$ are defined as
$$
  s = \min(1, \nu_{\mathbf{H}}), \ \ m = (|1-\nu_{\mathbf{H}}|- 1)/2, \ \ h = (\nu_{\mathbf{E}}-2)/2.
$$
where $\nu_{\mathbf{H}} = (R-1)$, $\nu_{\mathbf{E}} = N-2R$ and $N = \sum_{r = 1}^{R} n_{r}$.
\end{theorem}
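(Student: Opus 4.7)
The plan is to piggyback on Theorem \ref{teo1}, where all the structural work has already been done. Specifically, the proof of Theorem \ref{teo1} established that with $\mathbf{S}_E$ and $\mathbf{S}_H$ defined by (\ref{se}) and (\ref{sh}), one has $\mathbf{S}_H \sim \mathcal{W}_q(R-1, \mathbf{\Sigma}, \mathbf{\Omega})$ and $\mathbf{S}_E \sim \mathcal{W}_q(N-2R, \mathbf{\Sigma})$, independently, with the noncentrality parameter $\mathbf{\Omega}$ vanishing if and only if $H_0$ holds. Once this is in hand, Roy, Pillai and Lawley--Hotelling are simply three other invariant functions of the same eigenvalues $\lambda_1, \dots, \lambda_s$ of $\mathbf{S}_H\mathbf{S}_E^{-1}$, and their null distributions depend only on $(q, \nu_{\mathbf{H}}, \nu_{\mathbf{E}})$, so the critical values are tabulated by the parameters $s$, $m$, $h$ defined in the statement.

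First I would write out Roy's largest root via the union-intersection principle. The multivariate hypothesis $H_0: a\bg{\alpha}_r + b\bg{\beta}_r$ constant in $r$ decomposes, via the union-intersection principle, into the intersection over unit vectors $\mathbf{u} \in \Re^q$ of the univariate hypotheses $H_{0,\mathbf{u}}: \mathbf{u}'(a\bg{\alpha}_r + b\bg{\beta}_r)$ constant in $r$. For each fixed $\mathbf{u}$, projecting $\mathbf{Z}$ onto $\mathbf{u}$ reduces the problem to the univariate setting of \citet[Thm.~8.6.1]{g:76}, yielding a ratio $\mathbf{u}'\mathbf{S}_H\mathbf{u}/\mathbf{u}'\mathbf{S}_E\mathbf{u}$. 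Maximising over $\mathbf{u}$ is the classical Rayleigh quotient whose supremum is the largest eigenvalue $\lambda_1$ of $\mathbf{S}_H \mathbf{S}_E^{-1}$; using $\theta_i = \lambda_i/(1+\lambda_i)$ gives (\ref{roy}), and the distribution of $\theta_1$ under $H_0$ is Roy's largest-root distribution with parameters $(s,m,h)$.

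Next, for the Pillai and Lawley--Hotelling statistics, I would simply invoke the general identities recorded in Section~\ref{sec:2}: since $\mathbf{S}_H$ and $\mathbf{S}_E$ are independent Wishart matrices with $\mathbf{S}_H$ central under $H_0$, the quantities $\tr[\mathbf{S}_H(\mathbf{S}_E+\mathbf{S}_H)^{-1}] = \sum_i \theta_i$ and $\tr[\mathbf{S}_H\mathbf{S}_E^{-1}] = \sum_i \lambda_i$ are invariant functions of the common eigenvalues whose null distributions are the standard Pillai and Lawley--Hotelling distributions tabulated in the references cited. The equalities in (\ref{Pillai}) and (\ref{L-H}) then follow from the transformation $\lambda_i = \theta_i/(1-\theta_i)$.

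Finally, I would close by recording the parameter identification. Since $\nu_{\mathbf{H}} = R-1$ (rank of the contrast matrix $\mathbf{C}$ displayed in the footnote of Theorem~\ref{teo1}) and $\nu_{\mathbf{E}} = N - 2R$ (the degrees of freedom of the pooled Wishart $\mathbf{S}_E$), the parameters $s = \min(1,\nu_{\mathbf{H}})$, $m = (|1-\nu_{\mathbf{H}}|-1)/2$, $h = (\nu_{\mathbf{E}}-2)/2$ are exactly those entering the tables cited. The only potential obstacle is notational/bookkeeping rather than mathematical: making sure that the scaling absorbed into the factor $\mathbf{D}^{1/2}$ in the definition (\ref{sh}) of $\mathbf{S}_H$ matches the unit-variance normalisation assumed in the tabulated null distributions; but this has already been checked in Theorem~\ref{teo1} when it was verified that $\mathbf{D}^{1/2}\mathbf{Z} \sim \mathcal{N}_{R\times q}(\mathbf{D}^{1/2}\E(\mathbf{Z}), \mathbf{I}_R \otimes \mathbf{\Sigma})$, so no further computation is required.
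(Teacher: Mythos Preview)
Your proposal is correct and is in fact more than the paper offers: the paper states Theorem~\ref{teo2} without any proof environment, relying implicitly on the distributional results already established in Theorem~\ref{teo1} (independence of $\mathbf{S}_H$ and $\mathbf{S}_E$, their Wishart laws, and $\mathbf{\Omega}=\mathbf{0}$ under $H_0$) together with the standard eigenvalue-based criteria recalled in Section~\ref{sec:2}. Your explicit union--intersection derivation of Roy's statistic and the subsequent identification of the Pillai and Lawley--Hotelling statistics as invariant functions of the same eigenvalues is exactly the argument the paper leaves to the reader, so there is nothing to correct.
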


As a special case of Theorem \ref{teo1} (and Theorem \ref{teo2}),  we obtain the test of the hypotheses
\textbf{i}), \textbf{ii}) and \textbf{iii}) established above.

\begin{theorem}\label{teo3}
Consider the $R$ multivariate simple linear models (\ref{gmslm}). The likelihood ratio test of size
$\alpha$ of tests of hypotheses \textbf{i}), \textbf{ii}) and \textbf{iii})  are given as follows:

The test of $H_{0}$ vs. $H_{1}$ is this: Reject $H_{0}$ if and only if
  \begin{equation*}\label{wilks11}
    \Lambda = \frac{|\mathbf{S}_{E}|}{|\mathbf{S}_{E} + \mathbf{S}_{H}|} \leq \Lambda_{\alpha, 1, \nu_{\mathbf{H}},
    \nu_{\mathbf{E}}},
  \end{equation*}
  where
  \begin{eqnarray}\label{se11}
    \mathbf{S}_{E} &=& \sum_{r = 1}^{R}\mathbf{Y}'_{r}(\mathbf{I}_{n_{r}} - \mathbf{X}_{r}
        \mathbf{X}^{-}_{r}) \mathbf{Y}_{r},\\
     \label{sh11}
    \mathbf{S}_{H} &=& \left(\mathbf{D}^{1/2}\mathbf{Z}\right )'\left(\mathbf{I}_{R} -
    \mathbf{D}^{1/2}\mathbf{1}_{R}\mathbf{1}'_{R}\mathbf{D}^{1/2}/\mathbf{1}'_{R}\mathbf{D}\mathbf{1}_{R}\right)
    \left(\mathbf{D}^{1/2}\mathbf{Z}\right ),
  \end{eqnarray}
\begin{description}
  \item[i)] With $H_{0}: \bg{\alpha}_{1} = \bg{\alpha}_{2} = \cdots = \bg{\alpha}_{R}$ ($R$ set of lines
  with the same vector intercept) vs. $H_{1}: \bg{\alpha}_{i} = \bg{\alpha}_{j}$ for at least  one
  $i \neq j$, $i,j = 1,2,\dots,R$. Where $\mathbf{D} = \diag(d_{11}, d_{22}, \dots, d_{RR})$,
  $$
    d_{rr} = \frac{n_{r} \|\left (\mathbf{I}_{n_{r}} - \mathbf{1}_{n_{r}} \mathbf{1}'_{n_{r}}/n_{r}\right )
       \mathbf{x}_{r}\|^{2}}{\|\mathbf{x}_{r}\|^{2}}
  $$
  and
  $$
    \mathbf{Z} =
    \left (
       \begin{array}{c}
         \widehat{\bg{\alpha}}'_{1} \\
         \widehat{\bg{\alpha}}'_{2} \\
         \vdots \\
         \widehat{\bg{\alpha}}'_{R}
       \end{array}
    \right ) \in \Re^{R \times q}.
  $$
  \item[ii)] $H_{0}: \bg{\beta}_{1} = \bg{\beta}_{2} = \cdots = \bg{\beta}_{R}$ ($R$ set of lines
  are parallel) vs. $H_{1}: \bg{\beta}_{i} = \bg{\beta}_{j}$ for at least  one
  $i \neq j$, $i,j = 1,2,\dots,R$. With $\mathbf{D} = \diag(d_{11}, d_{22}, \dots, d_{RR})$,
  $$
    d_{rr} = \|\left (\mathbf{I}_{n_{r}} - \mathbf{1}_{n_{r}} \mathbf{1}'_{n_{r}}/n_{r}\right )
       \mathbf{x}_{r}\|^{2}
  $$
  and
  $$
  \mathbf{Z} =
  \left (
     \begin{array}{c}
       \widehat{\bg{\beta}}'_{1} \\
       \widehat{\bg{\beta}}'_{2} \\
       \vdots \\
       \widehat{\bg{\beta}}'_{R}
     \end{array}
    \right ) \in \Re^{R \times q}.
  $$
  \item[iii)] $H_{0}: \bg{\alpha}_{1} + \bg{\beta}_{1}x_{0} = \bg{\alpha}_{2} + ¡\bg{\beta}_{2}x_{0} = \cdots =
  \bg{\alpha}_{R} + \bg{\beta}_{R}x_{0}$ (all $R$ set of lines intersect at $x = x_{0}$, known) vs. $H_{1}$
  at least one equality is an inequality (all $R$ set of lines do not intersect at $x = x_{0}$). Where
  $\mathbf{D} = \diag(d_{11}, d_{22}, \dots, d_{RR})$,
  $$
    d_{rr} = \frac{n_{r} \|\left (\mathbf{I}_{n_{r}} - \mathbf{1}_{n_{r}} \mathbf{1}'_{n_{r}}/n_{r}\right )
       \mathbf{x}_{r}\|^{2}}{\|(\mathbf{x}_{r}-x_{0}\mathbf{1}_{n_{r}})\|^{2}}
  $$
  and
  $$
    \mathbf{Z} =
    \left (
       \begin{array}{c}
         \widehat{\bg{\alpha}}'_{1} + \widehat{\bg{\beta}}'_{1}x_{0}\\
         \widehat{\bg{\alpha}}'_{2} + \widehat{\bg{\beta}}'_{2}x_{0}\\
         \vdots \\
         \widehat{\bg{\alpha}}'_{R}+ \widehat{\bg{\beta}}'_{R}x_{0}
       \end{array}
    \right ) \in \Re^{R \times q}.
$$
\end{description}
Where $\nu_{\mathbf{H}} = (R-1)$, $\nu_{\mathbf{E}} = N-2R$.
\end{theorem}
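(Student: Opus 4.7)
The plan is to recognize that Theorem \ref{teo3} is a direct corollary of Theorem \ref{teo1}: each of the three hypotheses \textbf{i)}, \textbf{ii)}, \textbf{iii)} is exactly the hypothesis of Theorem \ref{teo1} for a specific choice of the scalars $(a,b)$. Once the correspondence is made, the expressions for $\mathbf{S}_{E}$, $\mathbf{S}_{H}$, $\mathbf{D}$ and $\mathbf{Z}$ and the distributional conclusions transfer verbatim, so it only remains to verify that the formulas for $d_{rr}$ and $\mathbf{Z}$ stated in each case are what the master formulas of Theorem \ref{teo1} specialise to.

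I would first record the three choices: for \textbf{i)} take $a=1,\ b=0$, so the hypothesis $a\bg{\alpha}_{r}+b\bg{\beta}_{r}=\bg{\alpha}_{r}$ becomes equality of the intercept vectors; for \textbf{ii)} take $a=0,\ b=1$, so the hypothesis collapses to equality of the slope vectors $\bg{\beta}_{r}$; for \textbf{iii)} take $a=1,\ b=x_{0}$, so $a\bg{\alpha}_{r}+b\bg{\beta}_{r}=\bg{\alpha}_{r}+\bg{\beta}_{r}x_{0}$ is precisely the common intersection at $x_{0}$. In each case the expression for $\mathbf{Z}$ in Theorem \ref{teo3} is obtained by substituting $(a,b)$ into
$$
  \mathbf{Z}=a\bigl(\widehat{\bg{\alpha}}'_{1},\dots,\widehat{\bg{\alpha}}'_{R}\bigr)'+b\bigl(\widehat{\bg{\beta}}'_{1},\dots,\widehat{\bg{\beta}}'_{R}\bigr)'
$$
given in Theorem \ref{teo1}.

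Next I would verify the three formulas for $d_{rr}$ by substituting the chosen $(a,b)$ into
$$
  d_{rr}=\frac{n_{r}\,\|(\mathbf{I}_{n_{r}}-\mathbf{1}_{n_{r}}\mathbf{1}'_{n_{r}}/n_{r})\mathbf{x}_{r}\|^{2}}{\|a\mathbf{x}_{r}-b\mathbf{1}_{n_{r}}\|^{2}}.
$$
For \textbf{i)} the denominator is $\|\mathbf{x}_{r}\|^{2}$; for \textbf{ii)} it is $\|\mathbf{1}_{n_{r}}\|^{2}=n_{r}$, which cancels the $n_{r}$ in the numerator and leaves $\|(\mathbf{I}_{n_{r}}-\mathbf{1}_{n_{r}}\mathbf{1}'_{n_{r}}/n_{r})\mathbf{x}_{r}\|^{2}$; for \textbf{iii)} it is $\|\mathbf{x}_{r}-x_{0}\mathbf{1}_{n_{r}}\|^{2}$. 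All three coincide with the expressions displayed in the statement. The matrices $\mathbf{S}_{E}$ and $\mathbf{S}_{H}$, the degrees of freedom $\nu_{\mathbf{H}}=R-1$, $\nu_{\mathbf{E}}=N-2R$, and the Wishart independence required for the Wilks statistic all carry over unchanged, since none of those items depend on the particular $(a,b)$ selected.

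There is essentially no technical obstacle: the content of the theorem is entirely contained in Theorem \ref{teo1}. The only step that requires any care is the bookkeeping in case \textbf{ii)}, where the cancellation between numerator and denominator of $d_{rr}$ at $(a,b)=(0,1)$ gives a qualitatively simpler form; one should note this explicitly so the reader sees that this is not a typo but a genuine simplification. With those three substitutions checked, the conclusion of Theorem \ref{teo3} follows immediately, and — as noted by the authors after Theorem \ref{teo2} — the analogous specialisations also produce Roy, Pillai and Lawley–Hotelling tests for each of the three hypotheses.
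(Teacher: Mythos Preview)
Your proposal is correct and follows exactly the same approach as the paper: the paper's proof simply states that Theorem~\ref{teo3} is a consequence of Theorem~\ref{teo1} with $(a,b)=(1,0)$ for the common intercept, $(a,b)=(0,1)$ for parallelism, and $(a,b)=(1,x_{0})$ for intersection at $x_{0}$. Your version is in fact more detailed than the paper's, since you explicitly verify the resulting formulas for $d_{rr}$ and $\mathbf{Z}$ in each case (including the cancellation of $n_{r}$ in case~\textbf{ii)}), whereas the paper leaves those substitutions to the reader.
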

\begin{proof}
  This is a simple consequence of Theorem \ref{teo1}. To test that a set of $R$ lines have the same vector intercept, take $a
  =1$ and $b=0$; to test whether set of $R$  lines are parallel, we set $a = 0$ and $b=1$, and to test that a set of $R$
   lines intersect at $x = x_{0}$, we set $a = 1$ and $b = x_{0}$.
\end{proof}

\subsection{Test about $R$ multivariate simple linear model under matrix multivariate elliptical model}\label{ssec:3}

In order to consider phenomena and experiments  under more flexible and robust conditions than the usual
normality,  various  works have appeared in the statistical literature since the 80's. Those efforts  has
been collected in various books and papers which are consolidated in the so termed generalised
multivariate analysis or multivariate statistics analysis under elliptically contoured distributions, see
\citet{gv:93} y \citet{fz:90}, among other authors. These new techniques generalize the classical matrix
multivariate normal distribution by a robust family of matrix multivariate distributions with elliptical
contours.

Recall that $\mathbf{Y} \in \Re^{n\times m}$ has a matrix multivariate elliptically contoured
distribution if its density with respect to the Lebesgue measure is given by:
$$
  dF_{\mathbf{Y}}(\mathbf{Y})=\frac{1}{|\mathbf{\Sigma}|^{n/2}|\mathbf{\Theta}|^{m/2}}
  h\left\{\tr\left[(\mathbf{Y}-\boldsymbol{\mu})'\mathbf{\Theta}^{-1}(\mathbf{Y}-
  \boldsymbol{\mu})\mathbf{\Sigma}^{-1}\right]\right\} (d\mathbf{Y}),
$$
where  $\boldsymbol{\mu} \in \Re^{n\times m}$, $ \mathbf{\Sigma} \in \Re^{m\times m}$, $ \mathbf{\Theta}
\in \Re^{n\times n}$, $\mathbf{\Sigma}>\mathbf{0}$ and $ \mathbf{\Theta}> \mathbf{0}$ and $(d\mathbf{Y})$
is the Lebesgue measure. The function $h: \Re \rightarrow [0,\infty)$ is termed the generator function
and satisfies $\int_{0}^\infty u^{mn-1}h(u^2)du < \infty$. Such a distribution is denoted by
$\mathbf{Y}\sim \mathcal{E}_{n\times m}(\boldsymbol{\mu},\mathbf{\Theta} \otimes \mathbf{\Sigma}, h)$,
\citet{gv:93}. Observe that this class of matrix multivariate distributions includes normal, contaminated
normal, Pearson type II and VI, Kotz, logistic, power exponential, and so on; these distributions have
tails that are weighted more or less, and/or they have greater or smaller degree of kurtosis than the
normal distribution.

Among other properties of this family of distributions, the invariance of some test statistics under this
family of distributions stands out, that is, some test statistics have the same distribution under
normality as under the whole family of elliptically contoured distributions, see theorems 5.3.3 and 5.3.4
of \citet[pp. 185-186]{gv:93}. Therefore, the distributions of Wilks, Roy, Lawley-Hotelling and Pillai
test statistics are invariant under the whole family of elliptically contoured distributions, see
\citet[pp. 297-299]{gv:93}.

Finally, note that, in multivariate linear model, it was assumed that the $x$'s were fixed. However, in
many applications, the $x$'s are random variables. Then, as in the normal case, see \citet[Section 10.8,
p. 358]{re:95}, if we assume that $\left (y_{1}, y_{2}, \dots, y_{q}, x \right )$ has a multivariate
elliptically contoured distribution, then all estimations and tests have the same formulation as in the
fixed-$x$, case. Thus there is no essential difference in our procedures between the fixed-$x$ case and
the random-$x$ case.

\section{Application}\label{sec:4}

The rosebush (\textit{Rosa sp. L.}) is the ornamental species of major importance in the \textit{State of
Mexico, Mexico}, being the red spider (\textit{Tetranychus urticae Koch}) (\textit{Acari: Tetranychidae})
its main entomological problem, the control has been based almost exclusively using acaricide, which has
caused this plague to acquire resistance in a short time. In order to counteract this problem in part, an
experiment was carried out using the variety of \textit{red petals Vega} in two greenhouses located in
the \textit{Ejido\footnote{A piece of land farmed communally, pasture land, other uncultivated lands, and
the fundo legal, or town site, under a system supported by the state.} "Los Morales", in Tenancingo,
State of Mexico, Mexico}, from October 2008 to August 2009. In a greenhouse, chemical control was applied
exclusively, while in the other, combined control (chemical and biological) was used, where applications
of acaricide were reduced and releases of two predatory mites were made: \textit{Phytoseiulus persimilis
Athias-Henriot} and \textit{Neoseiulus californicus McGregor} (\textit{Acari: Phytoseiidae}). The red
spider infestations decrease the length of the stem ($Y_{1}$) and the size of the floral button
($Y_{2}$), preponderant characteristics so that the final product reaches the best commercial value, so
that a total of 15 stems were measured randomly and weekly from each greenhouse, their respective floral
button, to quantify their length and diameter in centimeters, respectively, for a total of 15 weeks
($X$), see \cite{pr:14}. The measurements of the variables were carried out from January to April 2009
and the application of the treatments was initiated in week 44 of 2008.

The investigator considers\footnote{In the original work, the analysis was made based on univariate
statistical techniques only.} that a multivariate simple linear model for the results of each greenhouse
is the appropriate model to relate the two dependent variables $Y_{1}$ and $Y_{2}$ in terms of the
independent variable $X$. The corresponding multivariate simple linear models are
$$
  \mathbf{Y}_{r} = \mathbf{X}_{r} \bg{\beta}_{r} + \bg{\epsilon}_{r}, \quad r = 1, 2
$$
$\bg{\epsilon}_{r} \sim \mathcal{E}_{n_{r}\times 2}(\mathbf{0}, \mathbf{I}_{n_{r}} \otimes \bg{\Sigma})$,
$\bg{\Sigma}\in \Re^{2 \times 2} $, $\bg{\Sigma} > \mathbf{0}$, with $n_{1} = 15$, and $n_{2} = 15$ and
$$
  \bg{\beta}_{r} = \left (
        \begin{array}{cc}
            \alpha_{r1} & \alpha_{r2} \\
            \beta_{r1} & \beta_{r2} \\
        \end{array}
        \right) =
        \left (
        \begin{array}{c}
            \bg{\alpha}'_{r}\\
            \bg{\beta}'_{r} \\
        \end{array}
        \right)
$$
The researcher ask for the following hypotheses testing.
\begin{description}
  \item[i)] $H_{01}: \bg{\beta}_{1} = \bg{\beta}_{2}$, that is, the set of lines
  are parallel (if the average stem length and the average floral button diameter of each sample of roses per
  week are the same under the two methods of pest control);
  \item[ii)] $H_{02}: \bg{\alpha}_{1} = \bg{\alpha}_{2}$, that is, the set of lines
  have a common vector intercept (if the average stem length and the average floral button diameter
  of each sample roses in week zero are the same under the two methods of pest control).
\end{description}
The results of the experiment are presented in the next Table \ref{tab1}.
\begin{table}[!h]
  \caption{Experimental results of length of the stem (cms) and the diameter of the floral button (cms)
  of Vega rose variety.}\label{tab1}
\begin{center}
  \begin{tabular}{||r r r|r r r||}
    \hline\hline
    \multicolumn{3}{||c|}{Biological control} & \multicolumn{3}{c|}{Chemical control}\\
    $X$ & $Y_{1}$ & $Y_{2}$ & $X$ & $Y_{1}$ & $Y_{2}$ \\
    \hline\hline
    1  & 67.32 & 4.87 &  1 & 55.74 & 4.82 \\
    2  & 68.92 & 4.89 &  2 & 58.63 & 4.97 \\
    3  & 69.33 & 5.07 &  3 & 61.14 & 5.01 \\
    4  & 71.66 & 5.19 &  4 & 62.46 & 5.06 \\
    5  & 72.26 & 5.26 &  5 & 62.96 & 5.13 \\
    6  & 76.55 & 5.73 &  6 & 64.55 & 5.22 \\
    7  & 81.41 & 5.82 &  7 & 66.87 & 5.28 \\
    8  & 82.71 & 6.09 &  8 & 67.93 & 5.34 \\
    9  & 83.09 & 6.15 &  9 & 68.38 & 5.37 \\
    10 & 83.59 & 6.17 & 10 & 68.88 & 5.39 \\
    11 & 83.91 & 6.24 & 11 & 69.76 & 5.40 \\
    12 & 84.67 & 6.30 & 12 & 71.31 & 5.42 \\
    13 & 85.34 & 6.33 & 13 & 72.98 & 5.54 \\
    14 & 87.41 & 6.61 & 14 & 74.33 & 5.65 \\
    15 & 88.21 & 6.62 & 15 & 76.44 & 5.74 \\
    \hline
  \end{tabular}
\end{center}
\end{table}

Thus the matrices $\bg{\beta}_{1}$, $\bg{\beta}_{2}$ and $\mathbf{S}_{E}$ are given by
$$
  \bg{\beta}_{1} = \left (
                        \begin{array}{cc}
                          66.521429 & 4.75238095\\
                           1.571321 & 0.13378571\\
                        \end{array}
                   \right ), \quad
  \bg{\beta}_{2} = \left (
                        \begin{array}{cc}
                          56.416286 & 4.83647619\\
                           1.300964 & 0.05660714\\
                        \end{array}
                   \right )
$$
and
$$
  \mathbf{S}_{E} = \left (
                        \begin{array}{rr}
                          65.625451 & 3.9069754\\
                           3.906975 & 0.3025506\\
                        \end{array}
                   \right ).
$$
Moreover,
\begin{description}
  \item[i)] from Theorem \ref{teo3} \textbf{ii)} we have
  $$
    \mathbf{S}_{H} = \left (
                        \begin{array}{rr}
                          10.233018 & 2.9212089\\
                           2.921209 & 0.8339145\\
                        \end{array}
                   \right ), \quad\mbox{ and}
  $$
  \begin{table}[!h]
  \caption{Four criteria to proof $H_{01}: \bg{\beta}_{1} = \bg{\beta}_{2}$}\label{tab2}
  \medskip
  \begin{minipage}[t]{6in}
  \hspace{.5cm}
  \begin{tabular}{lll}
    \hline
      Criteria & Statistic & $\alpha$ Critical value \\
      \hline
      Wilks\footnote{\scriptsize Remember that for this tests, the decision rule is:
      statistics $\leq$ critical value} \phantom{jjjjjjjjjjjjjjjjjjjj}& 0.1159631
      \phantom{jjjjjjjjjjjjjjjjjj}& 0.860199 \\
      Roy & 0.8840369 & 0.775 \\
      Pillai & 0.8840369 & 0.775\\
      Lawley-Hotelling & 7.62343 & 4.225201\footnote{\scriptsize Using an F
      approximation, see equation (6.26) in \citet[p.166, 1995]{re:95}.
    }\\
   \hline
   \end{tabular}
   \end{minipage}
   \end{table}

   Thus, from Table \ref{tab2}, there is no doubt that the four criterions reject the null hypothesis
   $H_{01}: \bg{\beta}_{1} = \bg{\beta}_{2}$ for $\alpha = 0.05$.
  \item[ii)] Similarly, from Theorem \ref{teo3}\textbf{i)}, the matrix $\mathbf{S}_{\mathbf{H}}$ is given by
  $$
    \mathbf{S}_{H} = \left (
                        \begin{array}{rr}
                          172.934851 & -1.43916793\\
                           -1.439168 &  0.01197679\\
                        \end{array}
                   \right ).
  $$
 and
  \begin{table}[!h]
  \caption{Four criteria to proof $H_{02}: \bg{\alpha}_{1} = \bg{\alpha}_{2}$}\label{tab3}
  \medskip
  \begin{minipage}[t]{6in}
  \hspace{.5cm}
  \begin{tabular}{lll}
    \hline
      Criteria & Statistic & $\alpha$ Critical value \\
      \hline
      Wilks\footnote{\scriptsize Remember that for this tests, the decision rule is:
      statistics $\leq$ critical value} \phantom{jjjjjjjjjjjjjjjjjjjj}& 0.06658425
      \phantom{jjjjjjjjjjjjjjjjjj}& 0.860199 \\
      Roy & 0.9334158 & 0.808619 \\
      Pillai & 0.9334158 & 0.808619\\
      Lawley-Hotelling & 14.01857 & 4.225201\footnote{\scriptsize Using an F
      approximation, see equation (6.26) in \citet[p.166, 1995]{re:95}.
    }\\
   \hline
   \end{tabular}
   \end{minipage}
   \end{table}

   From Table \ref{tab3} we can conclude that under the four criterions of test the hypothesis
   $H_{02}: \bg{\alpha}_{1} = \bg{\alpha}_{2}$ is rejected for a level of significance of $\alpha = 0.05$.
\end{description}

\begin{figure}[ht!]
\begin{minipage}[b]{0.5\textwidth}
  \begin{center}
  \subfigure[Length of the stem]{\includegraphics[scale=0.4]{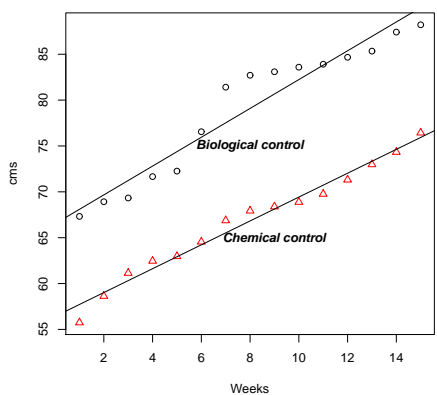}}
  \end{center}
\end{minipage}
\hfill
\begin{minipage}[b]{0.5\textwidth}
\centering \subfigure[Diameter of the floral button]{\includegraphics[scale=0.4]{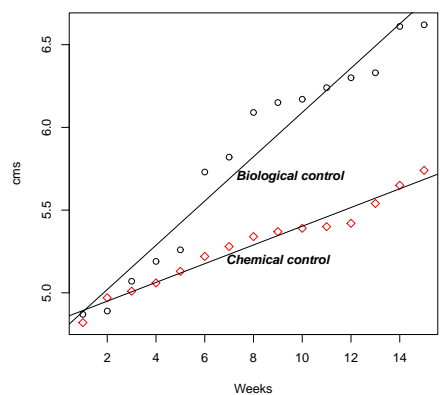}}
\end{minipage}
\caption{Observations and adjusted values}
\end{figure}

Given that $R = 2$, we can easily check graphically  the conclusions reached in the hypothesis testing.
Figure 1 (b) shows the intersection of lines for the floral button diameters, which explains the
rejection of parallelism hypothesis. However,  Figure 1(a) shows parallel  lines, which certainly implies
that the average length of stem for each sample per week is the same for both pest control. Similarly,
Figure 1(a) depicts very different  intercepts associated to the length of the stem, explaining the
rejecting of the hypothesis for equal intercepts. Also, Figure 1(b) shows equal intercepts, which implies
that the average floral button diameter for each sample in week zero is the same for both pest control.


The thesis \citet{pr:14} concludes that the biological control method reduces infestation of the pest and
as a consequence both the stem length and the button size are increased. This aspect promotes a higher
sale price, but this result was not incorporated in the addressed work. Our analysis confirms these
conclusions, but in a robust way that include all the variables simultaneously.

\section{Conclusions}\label{sec:5}

As a consequence of Subsection \ref{ssec:3} the three hypotheses testing of this paper are valid under
the complete  family of elliptically contoured distribution, i.e. in any practical circumstance we can
assume that our information have a matrix multivariate elliptically  contoured distribution instead of
considering the usual non realistic normality.



\end{document}